\documentclass{conm-p-l}

\copyrightinfo{2010}{}

\setcounter{page}{1}

\usepackage{graphicx}

\newtheorem{theorem}{Theorem}[section]
\newtheorem{lemma}[theorem]{Lemma}

\theoremstyle{definition}

\theoremstyle{remark}
\newtheorem{remark}[theorem]{Remark}

\numberwithin{equation}{section}



\newcommand{\tu}{\tilde{u}}

\newcommand{\e}{\epsilon}

\newcommand{\vv}{\vec{v}}
\newcommand{\vu}{\vec{u}}

\newcommand{\dl}{\delta}

\renewcommand{\th}{\theta}

\newcommand{\al}{\alpha}

\newcommand{\pa}{\partial}

\newcommand{\om}{\omega}

\newcommand{\na}{\nabla}
\newcommand{\lag}{\langle}
\newcommand{\rag}{\rangle}
\newcommand{\tx}{\tilde{x}}

\newcommand{\non}{\nonumber}


\begin{document}

\title[Recurrence in 2D Inviscid Channel Flow]{Recurrence in 2D Inviscid Channel Flow}

\author{Y. Charles Li}
\address{Department of Mathematics, University of Missouri, 
Columbia, MO 65211, USA}
\curraddr{}
\email{liyan@missouri.edu}
\thanks{}


\subjclass{Primary 37, 76; Secondary 35, 34}
\date{}

\dedicatory{}

\keywords{Recurrence, inviscid channel flow, kinetic energy, enstrophy, compact embedding.}

\begin{abstract}
I will prove a recurrence theorem which says that any $H^s$ ($s>2$) solution to the 2D inviscid channel flow returns repeatedly to an arbitrarily small $H^0$ neighborhood. Periodic boundary condition is imposed along the stream-wise direction. The result is an extension of an early result of the author [Li, 09] on 2D Euler equation under periodic boundary conditions along both directions.
\end{abstract}

\maketitle










\section{Introduction}

The recurrent nature of the solutions to the 2D Euler equation depends 
crucially upon the geometry of the domain containing the fluid. Here 
the specific geometry (rather than topology) is fundamental. We are 
interested in three types of domains:
\begin{itemize}
\item Periodic Cell $D_p = [0,\ell_x] \times [0, \ell_y]$, which is a 
period cell in $\mathbb{R}^2$ where the fluid flow is under periodic boundary 
conditions in both $x$ and $y$ directions with periods $\ell_x$ and $\ell_y$.
\item Channel Cell $D_c = [0,\ell_x] \times [0, a]$, which is a 
channel cell of the channel $\mathbb{R} \times [0, a]$ where the fluid 
flow is under periodic boundary condition in $x$ direction with period 
$\ell_x$, and non-penetrating conition along the walls $y =0, a$. 
\item Annulus $D_a = \{ (r, \th ) \ | \ R_1 \leq r \leq R_2 \}$ in 
polar coordinates, where the fluid 
flow is under non-penetrating conition along the walls $r =R_1, R_2$.
\end{itemize}

For the fluid domain $D_p$,
the solutions to the 2D Euler equation are recurrent somewhere along 
the orbit in the $L^2$ norm of velocity \cite{Li09a}. Below we will show 
that this is also true for the fluid domain $D_c$. For the fluid domain $D_a$,
we believe that this is no longer true. In the $D_a$ case, the enstrophy 
is not equivalent to the $W^{1,2}$ norm of velocity. For example,
\begin{equation}
(u,v) = \frac{1}{x^2+y^2} (-y, x) 
\label{PR}
\end{equation}
is an irrotational steady solution of the 2D Euler equation. Its vorticity 
is zero, while its $W^{1,2}$ norm is non-zero. The argument of \cite{Li09a}
for recurrence depends crucially upon the equivalence of the enstrophy  
to the $W^{1,2}$ norm of velocity. 

On the other hand, for the fluid domain $D_a$, there is an initial condition 
such that for
any other initial condition in a $C^1$ vorticity neighborhood, the solution  
never returns to the $C^1$ vorticity neighborhood \cite{Li09b} \cite{Nad91}.
The inner and outer circles of the annulus play a fundamental role in the 
the construction of this example. It is the multiconnectness nature of the 
annulus that makes the example possible. The irrotational steady solution 
(\ref{PR}) is also fundamental in the construction of the example. For the 
$D_p$ and $D_c$ cases, we do not believe the non-recurrent example of the 
$D_a$ case exists, since the irrotational solution (\ref{PR}) does not exist.

The recurrence in $L^2$ norm of velocity is a result of two main ingredients:
(1). the conservation of energy and enstropy, (2). the equivalence of the 
enstrophy to the $W^{1,2}$ norm of velocity. The first ingredient is also 
credited for generating the well-studied phenomenon, the so-called forward 
energy cascade and inverse enstrophy cascade, see e.g. \cite{Rut98}.

\section{Recurrence in 2D Inviscid Channel Flow}

The 2D inviscid channel flow is governed by
\begin{eqnarray}
& & \frac{\pa u}{\pa t} + u \frac{\pa u}{\pa x} + v \frac{\pa u}{\pa y}
= - \frac{\pa p}{\pa x}, \label{E1} \\
& & \frac{\pa v}{\pa t} + u \frac{\pa v}{\pa x} + v \frac{\pa v}{\pa y}
= - \frac{\pa p}{\pa y}, \label{E2} \\
& & \frac{\pa u}{\pa x} + \frac{\pa v}{\pa y} = 0; \label{E3} 
\end{eqnarray}
subject to the boundary condition
\begin{equation}
v =0 \quad \text{at } y =a, b. 
\label{BC}
\end{equation}
Here we also pose periodic boundary condition along $x$-direction (of period
$L_x$). Thus
\begin{eqnarray}
& & u = \sum_{n=-\infty}^{+\infty} u_n(y) e^{in\al x} , \quad 
v = \sum_{n=-\infty}^{+\infty} v_n(y) e^{in\al x} , \label{vft} \\
& & p = \sum_{n=-\infty}^{+\infty} p_n(y) e^{in\al x} ; \non
\end{eqnarray}
where $\al = 2\pi / L_x$. Our first goal here is to reach a setting that 
the spatial averages of $u$ and $v$ are zero. This setting is not necessary for 
proving the recurrence, but it is an interesting fact itself. By the 
incompressibility condition (\ref{E3}),
\[
\frac{\pa v_0}{\pa y} = 0.
\]
Using the boundary condition (\ref{BC}), one gets
\begin{equation}
v_0(y) = 0. 
\label{v}
\end{equation}
Thus the spatial average of $v$ is zero. Denote by $\lag u \rag$ the 
spatial average of $u$
\[
\lag u \rag = \frac{1}{(b-a)L_x} \int_a^b \int_0^{L_x} u \ dx dy
= \frac{1}{b-a} \int_a^b u_0(y) \ dy.
\]
The inviscid channel flow (\ref{E1})-(\ref{E3}) together with its boundary 
conditions are invariant under the transformation: 
\[
u = \tu + \lag u \rag , \quad  \tx = x - \lag u \rag t .
\]
Together with (\ref{v}), without loss of generality, we can assume that 
the spatial averages of $u$ and $v$ are zero. A simple calculation shows that 
the kinetic energy and enstrophy
\[
E = \int_a^b \int_0^{L_x} (u^2+v^2) \ dx dy , \quad 
G = \int_a^b \int_0^{L_x} \om^2 \ dx dy
\]
where $\om = \pa_x v - \pa_y u$ is the vorticity, are invariant under the 
inviscid channel flow. Next we will show that $G$ is equivalent to the 
$W^{1,2}$ norm of velocity. 

\begin{lemma}
\[
\int_a^b \int_0^{L_x} \left [ (\pa_xu)^2+(\pa_yu)^2+ (\pa_xv)^2+(\pa_yv)^2
\right ]\ dx dy = G.
\]
\label{equ}
\end{lemma}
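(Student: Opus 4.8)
The plan is to show that the pointwise difference between the two integrands is a null Lagrangian (an exact divergence), so that its integral collapses to boundary terms that vanish under the channel geometry. Expanding $\om^2 = (\pa_x v - \pa_y u)^2$ and subtracting it from the $W^{1,2}$ integrand yields
\[
(\pa_x u)^2 + (\pa_y u)^2 + (\pa_x v)^2 + (\pa_y v)^2 - \om^2 = (\pa_x u)^2 + (\pa_y v)^2 + 2\, \pa_x v \, \pa_y u .
\]
Completing the square against the divergence, the right-hand side equals $(\pa_x u + \pa_y v)^2 - 2(\pa_x u\, \pa_y v - \pa_x v\, \pa_y u)$. The incompressibility condition (\ref{E3}) annihilates the first term, leaving exactly $-2$ times the Jacobian $J = \pa_x u\, \pa_y v - \pa_x v\, \pa_y u$. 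So the lemma reduces to showing $\int_a^b \int_0^{L_x} J \, dx\, dy = 0$.

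First I would record the null-Lagrangian identity $J = \pa_x(u\, \pa_y v) - \pa_y(u\, \pa_x v)$, which holds because the two second-order mixed-derivative terms $u\,\pa_x\pa_y v$ cancel. Integrating over the channel cell $[0,L_x]\times[a,b]$ and applying Fubini, the first piece integrates in $x$ to $[u\, \pa_y v]_{x=0}^{x=L_x}$, which vanishes by the stream-wise periodicity recorded in (\ref{vft}). The second piece integrates in $y$ to $\int_0^{L_x}[u\, \pa_x v]_{y=a}^{y=b}\, dx$.

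The crux — and the only place the channel geometry genuinely enters — is this last boundary term. Here I would invoke the non-penetration condition (\ref{BC}): since $v\equiv 0$ along each wall $y=a$ and $y=b$ \emph{as a function of $x$}, its tangential (stream-wise) derivative $\pa_x v$ must also vanish identically on the walls, so $[u\, \pa_x v]_{y=a}^{y=b}=0$. Hence $\int J\, dx\, dy = 0$, the difference of the two integrands has zero integral, and the identity follows. The subtlety I expect to flag is exactly this: the non-penetration condition is a statement only about the normal component, but because $v$ vanishes along the \emph{entire} wall rather than at isolated points, differentiating along the wall forces $\pa_x v = 0$ there, and this — paired with $x$-periodicity — is precisely what cancels both endpoint contributions. (Implicitly I am assuming enough regularity, e.g. $H^s$ with $s>2$ as in the abstract, to justify the integration by parts and the interchange of limits.)
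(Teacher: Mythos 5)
Your proof is correct and takes essentially the same route as the paper: handle the $(\pa_xu)^2+(\pa_yv)^2$ block via incompressibility and reduce the lemma to showing that the Jacobian $\pa_xu\,\pa_yv-\pa_xv\,\pa_yu$ integrates to zero by writing it as an exact divergence killed by $x$-periodicity and the wall condition. The only cosmetic difference is your choice of divergence form, $\pa_x(u\,\pa_yv)-\pa_y(u\,\pa_xv)$ versus the paper's $\pa_y(v\,\pa_xu)-\pa_x(v\,\pa_yu)$, which is why your wall term $u\,\pa_xv$ needs the (correct) tangential-derivative observation while the paper's wall term $v\,\pa_xu$ vanishes immediately from $v=0$ on $y=a,b$.
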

\begin{proof}
By the incompressibility condition (\ref{E3}),
\[
\int_a^b \int_0^{L_x} \left [ (\pa_xu)^2+(\pa_yv)^2
\right ]\ dx dy = -2 \int_a^b \int_0^{L_x} (\pa_xu) (\pa_yv) \ dx dy .
\]
Notice that
\begin{eqnarray*}
& & \int_a^b \int_0^{L_x} \left [ (\pa_xu) (\pa_yv)-(\pa_yu) (\pa_xv) \right ] \ dx dy \\
& & =\int_a^b \int_0^{L_x} \left [ \pa_y (v\pa_xu) - \pa_x (v\pa_yu)\right ] \ dx dy  = 0 ,
\end{eqnarray*}
where the first term vanishes due to the boundary condition (\ref{BC}) and the second term vanishes due to the periodicity along $x$-direction. Using the above two facts, we have
\begin{eqnarray*}
& & \int_a^b \int_0^{L_x} \left [ (\pa_xu)^2+(\pa_yu)^2+ (\pa_xv)^2+(\pa_yv)^2
\right ]\ dx dy \\
&=& \int_a^b \int_0^{L_x} \left [ (\pa_yu)^2+ (\pa_xv)^2 - 2 (\pa_xu) (\pa_yv)
\right ]\ dx dy \\
&=& \int_a^b \int_0^{L_x} \left [ (\pa_yu)^2+ (\pa_xv)^2 - 2 (\pa_yu) (\pa_xv)
\right ]\ dx dy = G .
\end{eqnarray*}
\end{proof}

\begin{lemma}
For any $C>0$, the set 
\[
S=\left \{ \vv = (u,v) \ \bigg | \  G = \int_a^b \int_0^{L_x} \om^2 \ dx dy \leq C \right \}
\]
is compactly embedded in $L^2(D_c)$ of $\vv$, $D_c = [0, L_x]\times [a,b]$. That is, the closure of $S$ in 
$L^2(D_c)$ is a compact subset of $L^2(D_c)$ .
\label{compe}
\end{lemma}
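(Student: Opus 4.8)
The plan is to recognize that, by Lemma \ref{equ}, the constraint $G \leq C$ is precisely a uniform bound on the $L^2$ norm of the full gradient of $\vv = (u,v)$, so that $S$ is a bounded subset of a homogeneous Sobolev space. The compact embedding into $L^2(D_c)$ will then follow from the Rellich-Kondrachov theorem, provided I can upgrade the gradient bound to a bound on the full $W^{1,2}(D_c)$ norm.

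First I would observe that by Lemma \ref{equ}, for every $\vv \in S$,
\[
\int_a^b \int_0^{L_x} \left[ (\pa_x u)^2 + (\pa_y u)^2 + (\pa_x v)^2 + (\pa_y v)^2 \right] dx\, dy = G \leq C ,
\]
so the gradients of $u$ and $v$ are uniformly bounded in $L^2(D_c)$. The domain $D_c$, being periodic in $x$ and a finite interval in $y$, is a bounded Lipschitz domain (a flat cylinder), to which the standard Sobolev compactness machinery applies.

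Next I would control the $L^2$ norms of $u$ and $v$ themselves. This is where the normalizations established earlier enter: recall that we arranged the spatial averages $\lag u \rag$ and $\lag v \rag$ to vanish, and that $v$ satisfies the non-penetration boundary condition $v = 0$ at $y = a, b$. For $u$, the Poincar\'e inequality for mean-zero functions gives $\| u \|_{L^2(D_c)} \leq c_1 \| \na u \|_{L^2(D_c)}$; for $v$, either the mean-zero condition or the vanishing boundary data yields $\| v \|_{L^2(D_c)} \leq c_2 \| \na v \|_{L^2(D_c)}$. Combining these with the gradient bound above shows that $S$ is in fact a bounded subset of $W^{1,2}(D_c)$.

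Finally I would invoke the Rellich-Kondrachov compactness theorem: on the bounded domain $D_c$, any set bounded in $W^{1,2}$ is precompact in $L^2$. Hence the closure of $S$ in $L^2(D_c)$ is compact, which is the assertion. The main obstacle is the middle step: the raw hypothesis $G \leq C$ controls only the gradients, and constants, which have zero gradient but arbitrary $L^2$ norm, would by themselves destroy compactness. It is precisely the zero-average normalization together with the channel boundary condition \eqref{BC} that furnishes the Poincar\'e inequality needed to close this gap.
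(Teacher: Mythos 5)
Your proposal is correct, and it closes the one genuine gap (constants are killed by the zero-average normalization and the wall condition, via Poincar\'e), but it reaches compactness by a different route than the paper's written proof. You reduce to the Rellich--Kondrachov theorem cited as a black box: Lemma \ref{equ} gives the uniform gradient bound, the Poincar\'e inequalities for mean-zero $u$ and for $v$ (using $v_0\equiv 0$, equivalently the boundary condition \eqref{BC}) upgrade this to a uniform $W^{1,2}(D_c)$ bound, and then bounded-in-$W^{1,2}$ implies precompact-in-$L^2$ on the bounded rectangle $D_c$. The paper instead proves the Rellich-type compactness from scratch, following Folland: it expands $\vv$ in a Fourier series in $x$ and a Fourier integral in $y$, uses the uniform bounds on $\vv^{\ k}_{n\xi}$ and $\pa_\xi \vv^{\ k}_{n\xi}$ together with Arzel\`a--Ascoli and a diagonal argument to extract a subsequence converging uniformly on each block $|n|\leq N$, $|\xi|\leq A$, and then shows the subsequence is Cauchy in $L^2$ by splitting off the high frequencies, whose contribution is controlled by $A^{-2}+N^{-2}$ times the uniform $H^1$ bound. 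Notably, the Poincar\'e step you feature is present in the paper too, but only in the remark preceding the proof (it is what justifies the bound $|\vv^{\ k}_{n\xi}|\leq C_1\|\vv^{\ k}\|_{L^2(D_c)}$ being uniform over $S$). Your version is shorter and entirely standard; the paper's buys a self-contained argument that makes explicit where the compactness comes from, namely equicontinuity of the transforms at low frequencies and gradient-controlled decay at high frequencies. Both are valid proofs of the lemma.
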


\begin{remark}
By the fact (\ref{v}),
\[
\| v \|_{L^2(D_c)} \leq C \| \na v \|_{L^2(D_c)}.
\]
By the Poincar\'e inequality for $u_0(y)$ and the fact that the $y$-average of 
$u_0(y)$ is zero, 
\[
\| u \|_{L^2(D_c)} \leq C \| \na u \|_{L^2(D_c)} .
\]
Without the facts of (\ref{v}) and the vanishing of the $y$-average of 
$u_0(y)$, we can still carry out the rest argument of the paper by replacing 
$G$ with $E+G$ in the above definition of $S$.
The above lemma is the well-known Rellich lemma. In a simpler language, an enstrophy ball is compactly embedded in the kinetic energy space. The proof below 
follows that of \cite{Fol76}. $\quad \Box$
\end{remark}
\begin{proof}
By Lemma \ref{equ},
\[
\int_a^b \int_0^{L_x}  |\na \vv |^2 \ dx dy \leq C.
\]
Starting from the representation (\ref{vft}), we expand $\vv_n (y)$ into a Fourier integral 
\[
\vv_n (y) = \int_{-\infty}^{+\infty} \vv_{n\xi} e^{i\xi y} \ d \xi ,
\]
where (ignoring a constant factor)
\[
\vv_{n\xi} = \int_a^b \vv_n (y)e^{-i\xi y} \ dy.
\]
Let $\{ \vv^{\ k} \}$ be a sequence in $S$. 
\[
\pa_\xi \vv^{\ k}_{n\xi} =  \int_a^b (-iy)\vv^{\ k}_{n}(y) e^{-i\xi y} \ dy.
\]
Thus
\[
|\vv^{\ k}_{n\xi}|, |\pa_\xi \vv^{\ k}_{n\xi}| \leq C_1 \| \vv^{\ k} \|_{L^2(D_c)},
\]
where $C_1$ is a constant only dependent on $a$ and $b$. 
For any $n$, by the Arzel\`a-Ascoli theorem, there is a subsequence 
$\{ \vv^{\ k_j}_n \}$ which converges uniformly on compact sets of $\xi$. 
For a different $n$, we can start from the subsequence $\{ \vv^{\ k_j}_n \}$
and find a further uniformly convergent subsequence. Thus there is a 
uniformly convergent subsequence $\{ \vv^{\ k_j}_{n\xi } \}$ on $|\xi |
\leq A$, $|n| \leq N$ for any finite $A$ and $N$. Next we show that 
$\{ \vv^{\ k} \}$ forms a Cauchy sequence in $L^2(D_c)$.
\begin{eqnarray*}
& & \| \vv^{\ k_j} - \vv^{\ k_m} \|^2_{L^2(D_c)}
= \sum_{n=-\infty}^{+\infty} \int_{-\infty}^{+\infty} \left |\vv^{\ k_j}_{n\xi }
- \vv^{\ k_m}_{n\xi }\right |^2 \ d\xi \\
&=& \sum_{|n| \leq N} \int_{-A}^{+A} \left |\vv^{\ k_j}_{n\xi }
- \vv^{\ k_m}_{n\xi }\right |^2 \ d\xi
+\sum_{|n| \leq N} \int_{|\xi | > A} \left |\vv^{\ k_j}_{n\xi }
- \vv^{\ k_m}_{n\xi }\right |^2 \ d\xi \\
&+& \sum_{|n| > N} \int_{-\infty}^{+\infty} \left |\vv^{\ k_j}_{n\xi }
- \vv^{\ k_m}_{n\xi }\right |^2 \ d\xi
\leq \sup_{|n| \leq N,|\xi | \leq A}\left |\vv^{\ k_j}_{n\xi }
- \vv^{\ k_m}_{n\xi }\right |^2 4NA \\
&+&A^{-2}\sum_{|n| \leq N} \int_{|\xi | > A} |\xi |^2\left |\vv^{\ k_j}_{n\xi }
- \vv^{\ k_m}_{n\xi }\right |^2 \ d\xi \\
&+& N^{-2}\sum_{|n| > N} n^2\int_{-\infty}^{+\infty} \left |\vv^{\ k_j}_{n\xi }
- \vv^{\ k_m}_{n\xi }\right |^2 \ d\xi \\
&\leq& 4NA \sup_{|n| \leq N,|\xi | \leq A}\left |\vv^{\ k_j}_{n\xi }
- \vv^{\ k_m}_{n\xi }\right |^2 + (A^{-2}+N^{-2}) \| \vv^{\ k_j} -  
\vv^{\ k_m} \|_{H^1(D)} .
\end{eqnarray*}
For any $\e >0$, choose $A$ and $N$ large enough so that the second term is 
less than $\e /2$ for all $j$ and $m$. Then choose $j$ and $m$ large enough 
so that the first term is less than $\e /2$. Thus, $\{ \vv^{\ k} \}$ forms a Cauchy sequence in $L^2(D_c)$. Finally let $\{ \vv^{\ k} \}$ be a sequence of the accumulation 
points of $S$ in $L^2(D_c)$. Then we can find a sequence $\{ \vu^{\ k} \}$ in $S$ such that 
\[
\| \vv^{\ k} - \vu^{\ k}   \|_{L^2(D_c)} < 1/k.
\]
Let $\{ \vu^{\ k_j} \}$ be the convergent subsequence, then $\{ \vv^{\ k_j} \}$ is also a convergent subsequence. Thus the closure of $S$ in $L^2(D_c)$ is a compact subset of $L^2(D_c)$.
\end{proof}

The inviscid channel flow (\ref{E1})-(\ref{E3}), together with the boundary 
condition (\ref{BC}) and periodicity along $x$-direction, is globally 
well-posed in $H^s(D_c)$ ($s>2$) \cite{TW97} \cite{Tem75} \cite{Kat67}. We 
have the following recurrence theorem.
\begin{theorem}
For any $\vv_0 \in H^s(D_c)$ ($s>2$), any $\dl >0$, and any $T>0$; there is a $\vv_* 
\in H^s(D_c)$ such that
\[
F^{m_jT}(\vv_0 ) \in B^0_\dl (\vv_* )=\{ \vv \in H^s(D_c) \  | \  \| \vv -\vv_*  \|_{H^0(D_c)}
< \dl \}
\]
where $\{ m_j \}$ is an infinite sequence of positive integers, and $F^t$ is the evolution operator 
of the inviscid channel flow.
\end{theorem}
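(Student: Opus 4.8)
The plan is to combine the two conserved quantities of the flow with the Rellich compactness of Lemma \ref{compe}, thereby reducing the recurrence assertion to the elementary fact that an infinite sequence in a compact metric space has a convergent, hence Cauchy, subsequence. The only genuinely dynamical input is that the enstrophy $G$ is invariant under $F^t$; everything else is soft functional analysis.

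First I would fix $\vv_0 \in H^s(D_c)$ and set $C = G(\vv_0)$. Since $G$ is conserved along the flow and the problem is globally well-posed in $H^s(D_c)$, every iterate $\vv_m := F^{mT}(\vv_0)$ (for $m = 0, 1, 2, \dots$) is a well-defined element of $H^s(D_c)$ with $G(\vv_m) = C$. In particular the entire discrete orbit lies in the enstrophy ball $S = \{ \vv \mid G \leq C \}$ appearing in Lemma \ref{compe}. I would then invoke that lemma: the closure $\bar S$ of $S$ in $L^2(D_c)$ is compact, and because $L^2(D_c)$ is a metric space, $\bar S$ is sequentially compact. Consequently the sequence $\{ \vv_m \}$ admits a subsequence $\{ \vv_{m_j} \}$ that converges in $L^2(D_c)$, and being convergent it is Cauchy in the $H^0(D_c) = L^2(D_c)$ norm.

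Finally, to match the statement I would produce the centre $\vv_*$ inside $H^s(D_c)$ rather than merely inside $\bar S \subset L^2(D_c)$. Given $\dl > 0$, I would use the Cauchy property to choose an index $J$ with $\| \vv_{m_j} - \vv_{m_k} \|_{H^0(D_c)} < \dl$ for all $j, k \geq J$, and then simply set $\vv_* := \vv_{m_J} = F^{m_J T}(\vv_0)$, which lies in $H^s(D_c)$ by global well-posedness. It follows that $\| \vv_{m_j} - \vv_* \|_{H^0(D_c)} < \dl$ for every $j \geq J$, so that the infinite set of positive integers $\{ m_j \mid j \geq J \}$ supplies exactly the repeated returns to $B^0_\dl(\vv_*)$ demanded by the theorem.

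The one point deserving care — and the natural place where a reader might expect the argument to yield only $\vv_* \in L^2$ — is precisely this last step. Anchoring the ball at an orbit point instead of at the $L^2$-limit keeps $\vv_*$ in $H^s$ at no cost, because the Cauchy property, not the limit itself, is all that recurrence requires. I do not regard this as a serious obstacle; the substantive work has already been discharged in Lemma \ref{equ} (identifying $G$ with the $H^1$ seminorm so that the orbit is $H^1$-bounded) and Lemma \ref{compe} (upgrading that bound to $L^2$-precompactness). Everything here is then a routine extraction of a Cauchy subsequence.
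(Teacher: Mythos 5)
Your proof is correct, and it rests on the same two pillars as the paper's --- conservation of enstrophy, which traps the discrete orbit $\{F^{mT}(\vv_0)\}$ in the set $S$ of Lemma \ref{compe}, and the compactness of the closure of $S$ in $H^0(D_c)$ --- but you exploit the compactness by a genuinely different route. The paper argues by covering: it extracts a finite subcover of $\dl/2$-balls of the closure of $S$, perturbs each centre into the dense subset $S_1 = S \cap H^s(D_c)$ (invoking Lemma \ref{equ} and the Poincar\'e inequality to control the $H^0$ distance by the enstrophy), and then applies the pigeonhole principle to the orbit, which stays in $S_1$ by invariance. You instead use sequential compactness to extract an $L^2$-convergent, hence Cauchy, subsequence of the orbit, and anchor the ball at a late orbit point $\vv_* = F^{m_JT}(\vv_0)$, which lies in $H^s(D_c)$ by global well-posedness. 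Your handling of the one delicate point --- getting $\vv_*$ into $H^s$ rather than merely into the $L^2$-closure of $S$ --- is cleaner than the paper's: by taking the centre on the orbit itself you dispense entirely with the density argument and the extra Poincar\'e step, at no cost to the conclusion. What the paper's covering argument buys in exchange is a finite list of candidate centres fixed in advance of the dynamics, closer in spirit to the classical Poincar\'e recurrence argument; but for the theorem as stated both arguments deliver the same result, and yours does so with less machinery.
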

\begin{proof}
Choose the $C$ in Lemma \ref{compe} to be
\[
2\int_a^b \int_0^{L_x}|\na \vv_0 |^2  \ dx dy = 2 \int_a^b \int_0^{L_x} \om_0^2 \ dx dy.
\]
Define two sets:
\begin{eqnarray*}
S &=& \left \{ \vv \  \bigg | \  \int_a^b \int_0^{L_x} \om^2 \ dx dy \leq 2 \int_a^b \int_0^{L_x} \om_0^2 \ dx dy \right \} , \\
S_1 &=& \left \{ \vv \in H^s(D_c) \  \bigg | \  \int_a^b \int_0^{L_x} \om^2 \ dx dy \leq 2 \int_a^b \int_0^{L_x} \om_0^2 \ dx dy \right \} .
 \end{eqnarray*}
Notice that $S_1$ is invariant under the 2D inviscid channel flow, and $S_1$ is a dense subset of $S$,
$S_1 = S \cap H^s(D_c)$. By Lemma \ref{compe}, the closure of $S$ in $L^2(D_c)
=H^0(D_c)$ is a compact subset. For any $\vv \in S$, denote by
\[
B_{\dl /2}(\vv)=\{ \vu \in H^0(D_c) \  | \  \| \vu -\vv \|_{H^0(D_c)}
< \dl /2\} .
\]
All these balls $\{ B_{\dl /2}(\vv ) \}_{\vv \in S}$ is an open cover of the closure of $S$ in 
$H^0(D_c)$. Thus there is a finite subset $ \{ \vv_1, \cdots ,\vv_N \} \subset S$ such that 
$\{ B_{\dl /2}(\vv_n) \}_{n=1, \cdots , N}$ is a finite cover. Since $S_1$ is dense in $S$, for each such 
$\vv_n$, one can find a $\vv^{\ *}_n \in S_1$ such that 
\[
\| \vv_n - \vv^{\ *}_n  \|_{H^0(D_c)} \leq \|  \vv_n - \vv^{\ *}_n  \|_{W^{1,2}(D_c)}  < \dl /4,
\]
by the Poincar\'e inequality, where again the $W^{1,2}(D)$ norm is equivalent to the vorticity $L^2$ norm in $S$, by Lemma \ref{equ}.
All the balls
\[
B_{\dl }(\vv^{\ *}_n)=\{ \vv \in H^0(D_c) \  | \  \| \vv -\vv^{\ *}_n \|_{H^0(D_c)} < \dl \}
\]
still covers $S$, thus covers $S_1 = S \cap H^s(D_c)$. Let $B^0_{\dl }(\vv^{\ *}_n)=
B_{\dl }(\vv^{\ *}_n) \cap H^s(D_c)$,
\[
B^0_{\dl }(\vv^{\ *}_n)= \{ \vv \in H^s(D_c) \ | \   \| \vv - \vv^{\ *}_n \|_{H^0(D_c)} < \dl \} .
\]
Then 
\[
S_1 \subset \bigcup_{n=1}^N B^0_{\dl }(\vv^{\ *}_n) .
\]
By the invariance of $S_1$ under the 2D inviscid channel flow $F^t$, there is at least one $n$ such that 
an infinite subsequence of $\{ F^{mT}(\vv_0 )\}_{m=0,1, \cdots }$ is included in $B^0_{\dl }(\vv^{\ *}_n) $.
The theorem is proved.
\end{proof}

\end{document}